\newcommand\NoBlackBoxes{\global\overfullrule0pt}
\let\serieslogo@\relax
\let\@setcopyright\relax
\newtheorem{definition}{Definition}[section]
\newtheorem{theorem}[definition]{Theorem}
\newtheorem{lemma}[definition]{Lemma}
\newtheorem{proposition}[definition]{Proposition}
\newtheorem{rem}[definition]{Remark}
\renewcommand{\epsilon}{\varepsilon}
\renewcommand{\phi}{\varphi}
\begin{document}

\setcounter{page}{1}

\title{On the capacity of a new model of  associative memory based on neural cliques}

\author{Judith Heusel,
\thanks{Fachbereich Mathematik und Informatik,
University of M\"unster,
Einsteinstra\ss e 62,
48149 M\"unster,
email: {\tt jheus01@uni-muenster.de}}
Matthias L\"owe,
\thanks{Institut f\"ur Mathematische Statistik,
Fachbereich Mathematik und Informatik, Universit\"at M\"unster,
Einsteinstr. 62, 48149 M\"unster, Germany, email: {\tt maloewe@math.uni-muenster.de}}
Franck Vermet
\thanks{Departement de Mathematiques, Universite de Bretagne Occidentale,  6, avenue Victor Le Gorgeu
B.P. 809
F- 29285 Brest Cedex, France, email: {\tt Franck.Vermet@univ-brest.fr}}
}

%\thanks{Research of the second author was supported by ??}

\date{\today}

%\subjclass[2000]{Primary: 82C32, 60K35, Secondary: 68T05, 92B20}

%\keywords{Neural networks, associative memory, exponential inequalities}
\maketitle
%\newcommand{\wlim}{\mathop{\hbox{\rm w-lim}}}
%%\newcommand{\sgn}{\mathop{\hbox{\rm sgn}}}
%\newcommand{\na}{{\mathbb N}}
%\newcommand{\re}{{\mathbb R}}
%
%\newcommand{\vep}{\varepsilon}
%

%\begin{document}
%temporarily
%\today
%\tableofcontents
%temporarily
\begin{abstract}
Based on recent work by Gripon and Berrou, we introduce a new model of an associative memory. We show that this model has an efficiency bounded away from 0 and is therefore significantly more effective than the well known Hopfield model. We prove  that the synchronous and asynchronous retrieval dynamics converge and give upper and lower bounds on the memory capacity of the model.
\end{abstract}

%\maketitle

\section{Introduction}
In \cite{griponb} Gripon and Berrou introduced a new model of an associative memory. This model seems to be more effective than standard models of associative memories, in particular the Hopfield model, while at the same time it respects the basic principles of associative memories, e.g. locality. In this model, which we will call GB-model for short, there are
$N$ neurons grouped into $c$ groups of $l$ neurons. Typically, one $c$ is bounded from below by a constant and from above by $\log l$, where the latter bound seems to be the most interesting case. One tries to store $M$ messages $m^1,\ldots,m^M$ in this network. These messages are sparse in the sense that each message $m^\mu$ has $c$ active neurons, only, i.e.
$m^\mu=(m_1^\mu,\ldots,m_c^\mu)$, and for each $\mu=1, \ldots M$ and each $i=1, \ldots c$, $m_i^\mu$ denotes the (only) active neuron of the message $m_\mu$ in the i'th block.
With such a message $m^\mu$ one associates the edges of the complete graph $K^\mu$ spanned by the vertices $m_1^\mu,\ldots,m_c^\mu$.
A message $m^0=(m_1^0,\ldots,m_c^0)$ is considered to be stored in the model, if all edges of the complete graph spanned by $(m_1^0,\ldots,m_c^0)$ are present in the set of edges
$${\mathcal{E}}:=\{e: e \mbox{ is an edge of one of the } m^\mu\}.$$

Gripon and Berrou now analyze in a number of papers (see e.g. \cite{griponb},\cite{gripona}, \cite{griponc} among others) the performance of this network for a random input $m^1,\ldots,m^M$, e.g. they ask for the probability that a random pattern $m^0$ is stored or that a corrupted pattern can be retrieved. However, all their analysis is either based on numerical simulations or on the assumption that the events that a given edge occurs in ${\mathcal{E}}$ are independent. This is, of course, not true, even though the dependence is very weak. On the other hand, the precise form of the network makes it difficult to analyze it rigorously.
In the present paper we therefore strive for the rigorous analysis of yet another associative memory model, which is, however, closely related to the GB model.

To motivate it, observe that the GB model can be mathematically described in the following way:
We start with an alphabet ${\mathcal{A}}= \{1, \ldots, l\}$. A message $m^\mu$ is then a string $m^\mu=(m_1^\mu,\ldots,m_c^\mu)\in {\mathcal{A}}^c$.
With a message $m^\mu$  we associate a (column) vector $\psi(m^\mu)\in (\{0,1\}^l)^c$ obtained by replacing the $m_i^\mu$ with the unit vector $e_{m_i^\mu}$.
In a slight abuse of notation we will also use the notation ${\mathcal{A}}^c$ for the set $\{e_1,\ldots,e_l\}^c$.
Now build the
the 0-1-matrix
$W$ given by
$$
\tilde W = \max_{m \in {\mathcal{M}}} \psi(m) \psi(m)^t
$$
where ${\mathcal{M}}=\{m^1,\ldots,m^M\}$ and $\psi(m)^t$ is the transpose of $\psi(m)$. Thus
\begin{itemize}
\item for $a\not=a'$ we have $\tilde W_{(a,k),(a',k')} = 1$ if and only if there is an edge between $(a,k)$ and $(a',k')$.
\item for $k\not= k'$ we have $\tilde W_{(a,k),(a,k')} = 0$
\item and $\tilde W_{(a,k),(a,k)} = 1$ if and only if there exists $\mu$ such that the k'th neuron in block $a$ is 1 (this is equivalent
to adding a self-loop to the graph for each vertex $(a,k)$ such that there exists
$\mu$ with $m^\mu_a= k$.
\end{itemize}

With this matrix one can associate a dynamics $D$ on $(\{0,1\}^l)^c$ : for $v\in (\{0,1\}^l)^c$,
$$D(v)_{(a,k)}= \bigwedge_{b=1}^c \bigvee_{r=1}^l \tilde W_{(a,k), (b,r)} v_{(b,r)},$$
which can also be written as
$$\displaystyle D(v)_{(a,k)} =  \mbox{\bf{1}}_{\{\sum_{b=1}^c \mbox{\bf{1}}_{\{\sum_{r=1}^l \tilde W_{(a,k), (b,r)} v_{(b,r)} \ge 1\}}\ge c\}}.$$
%with the convention $\mbox{\bf{1}}_{\{A\}}=1$ if $A$ is true and 0 elsewhere.

Obviously, for all learned message $m\in {\mathcal {M}}$, we have
$$D(\psi(m))= \psi(m).$$
However, a more detailed analysis of the associative abilities of the network turn out to be difficult, basically due to the double indicator structure of $D$ and the function {\it max} in ${\tilde W}$.

We therefore propose the following variant of the above model. Consider the Bernoulli random variables $\zeta_{(a,i)}^\mu$, that denote if neuron $i$ of cluster $a$ is part of message $\mu$:
$$\zeta_{(a,i)}^\mu=\begin{cases} 1 & \text{ if }m_a^\mu=e_i,\\
0 & \text{ otherwise.}
\end{cases}
$$
Under the above assumptions on the patterns, i.e. their independence and equidistribution, the random variables $(\zeta_{(a,i)}^\mu)_{1\leq a\leq c,1\leq j\leq l}^{1\leq\mu\leq M}$ are Bernoulli variables, each with parameter $\frac{1}{l}$, and $\zeta_{(a,i)}^\mu$ is independent of $\zeta_{(b,j)}^\nu$ if $a\ne b$ or $\mu\ne \nu$.\\
We define the matrix $W\in{\mathbb N}^{cl\times cl}$ by
$$W_{(a,i),(b,j)}=\sum_{\mu=1}^M\zeta_{(a,i)}^\mu\zeta_{(b,j)}^\mu,
$$
for $a,b\in \{1,\ldots,c\}, a\ne b$ and $i,j\in \{1,\ldots,l\}$.

We set \vspace{2mm}$W_{(a,i),(a,j)}=0$ for all $i,j\in\{1,\ldots,l\}$ and $a\in \{1,\ldots,c\}.$ Hence, $W_{(a,i),(a,i)}=0$, while $\tilde W_{(a,i),(a,i)}\in \{0,1\}$. However, this choice is necessary due to the structure of $W$ as a sum or random variables.

Given an input vector $v=(v_{(b,j)})_{1\leq b\leq c,1\leq j\leq l}\in\{0,1\}^{cl}$, we define the dynamics
\begin{equation}\label{dyn}
\varphi_{(a,i)}(v)={\mathds 1}_{\lbrace\sum_{b=1}^c\sum_{j=1}^lW_{(a,i),(b,j)}v_{(b,j)}\geq\kappa c\rbrace}
\end{equation}
for some $\kappa>0$. It should be obvious that this variant is closely related to the GB model. On the other the structure of $W$ and the dynamics that include sums of random variables rather maxima and minima, makes it more accessible to probabilistic tools. Indeed from this point of view the model is reminiscent to the Hopfield model, whose the storage capacity has been analyzed in \cite{MPRV}, \cite{KomlosPaturi1988}, \cite{Newman_hopfield}, \cite{loukianova}, \cite{talagrand}, \cite{Lo98}, \cite{LV05},\cite{LV11}, and many other papers.\\

In the present note we want to justify our variant of the GB model by showing that it has a non-vanishing efficiency in the range of parameters we want to study. This will be done in Section 2. In Section 3 we introduce the sequential and parallel retrieval of the model and show that it converges, possibly to a cycle of length 2. Section 4 contains our bounds on the memory of storage capacity of our version of the GB model. Sections 5 and 6 are devoted to the proofs of these results.

\section{The efficiency of the model}
In this section we want to justify that our version of the GB model is potentially better than other models of neural networks, in particular the Hopfield model, that may be the best studied model of an associative memory. We will do this with a performance measure that is known as the {\it informational efficiency} $\eta$. $\eta$ compares the amount of information in the memorized messages to the logarithm of the number of bits needed to describe all the bonds in a state of the network at the storage capacity.

There are good reasons for this definition (and not comparing the amount of information in the memorized messages to the information in the network, e.g.). First, for any reasonable neural network the state of the network is a function of the information to be stored.
In particular, if we compared the information in the messages to the information in the network and we were in the retrieval regime $\eta$ would be 1 for any network. Second, the point in the entire theory of associative memories is that we do not know the stored information. Hence the best way to describe our network is to describe it bond by bond, as if they were independent (for a related discussion see also \cite{Jagota96}).

For our network, in view of Section 4, we will always choose $M=\alpha l^2$ and $c=\log l$.

Recall that in information theory a source is a probability distribution $\pi=(\pi_1, \ldots, \pi_A)$ on a (finite) alphabet $\{1, \ldots, A\}$ (for background material the reader is referred to classical textbook of Ash \cite{Ash}). Its information content is measured by the (binary) entropy
$$
H(\pi) = -\sum_{i=1}^A \pi_A \log_2 \pi_A.
$$
$H(\cdot)$ is maximized by the uniform distribution $u(i)=1/A$ for all i, in which case we obtain $H(u)=\log_2 A$. Assuming that we sample the $M$ words ${\mathcal{M}}$ with the uniform distribution  on all possible pairwise different $\binom{l^c}{M}$ we obtain that
$$
H({\mathcal{M}}) = \log_2 \left(\binom{l^c}{M}\right) \simeq Mc \log_2(l),
$$
for $l$ large, $M=\alpha l^2$, and $c=\log(l)$.

On the other hand, the network is completely described by the symmetric matrix $W$. There are $\binom{c}{2} l^2$ many matrix entries. Moreover for $l \to \infty$ a single matrix entry is asymptotically Poisson-distributed with parameter $\alpha$.
%This means that, if we consider the entropy of the probability distribution that describes all possible states of the network, we can equally well compute the entropy of the probability distribution $\mathcal{W}$ that describes all possible matrix entries of $W$. Now the $\binom{c}{2} l^2$ matrix entries are not independent, but taking them independent always gives an upper bound on the entropy. Moreover for $l \to \infty$ a single matrix entry is asymptotically Poisson-distributed with parameter $\alpha$.

This implies that the entropy of the network satisfies
\begin{eqnarray*}
H(W)  &= &\binom{c}{2} l^2 H(W_{(1,1),(2,1)}) \\ &\simeq& \binom{c}{2} l^2\  h({\mathcal{P}}\mbox{ois}(\alpha)),
\end{eqnarray*}
where
\begin{eqnarray*}
h({\mathcal{P}}\mbox{ois}(\alpha))&=& \frac 1{\log(2)}(\alpha(1-\log(\alpha))\\
&&\quad +e^{-\alpha}\sum_{k=0}^{+\infty} \alpha^k \frac{\log(k!)}{k!} )
\end{eqnarray*}
is the entropy of the Poisson law with parameter $\alpha$.

Therefore we obtain
$$
\eta = \frac{ H({\mathcal{M}}) } {H(W)} \simeq  \frac {2 \alpha }{\log(2)\ h({\mathcal{P}}\mbox{ois}(\alpha))}  >0.
$$
Note that in the Hopfield model with $M= \mathrm{const.} N /\log N$ patterns the efficiency tends to 0 when $N$ is large,
%({\tt{here we need a reference }})
while the efficiency for our model is bounded away from zero.
Indeed, for the Hopfield model, $H({\mathcal{M}})$ is of order $N^2/\log(N)$, while $H(W)$ is larger than $N^2$, since the matrix $W$ has $N(N-1)/2$ different entries and each entry $W_{ij}$ follows the Binomial law ${\mathcal{B}}in(M, 1/2)$.

Moreover also notice, that $\eta$ is increasing with $\alpha$, and one can compute numerically that $\eta > 1$ if $\alpha \geq 0.423$. Interestingly, in
Theorem \ref{thm3} we obtain similar critical value for $\alpha$: Both results show that if $\alpha$ is too large, the system looses information in the sense that $H({\mathcal{M}}) > {H(W)} $ and not all the information contained in the words can be stored in the system.

\section{The retrieval dynamics}
Already in \eqref{dyn} we introduced the dynamics $\phi$ of our version of the GB model.

As a matter of fact, one can associate either a sequential or a parallel dynamics with $\varphi$.  In this section will prove that for all initial condition, both dynamics converge.

First let
$$S= \varphi_{(c,l)}\circ \varphi_{(c,l-1)} \circ \ldots \circ \varphi_{(1,1)}.$$ For an arbitrary initial state $v(0)$ we define the sequential dynamics $v(t+1)=S(v(t))$.

Then the following result holds:
\begin{proposition}\label{lemma1}
Let $H=H_S$ be the following function:
$$H_S(v) = -\frac 12 \sum_{(a,i)} \sum_{(b,j)}  W_{(a,i), (b,j)}  v_{(a,i)} v_{(b,j)} +  \kappa c \sum_{(a,i)}  v_{(a,i)}.$$
The reason, why it carries an index $S$ is, that with $H$ as Hamiltonian (or energy function), the dynamics $S$ is a Hamiltonian zero temperature (or gradient descent) dynamics.

Then for all $M,c,l$, and $v(0)$,
$$H_S(v(t+1)) \le H_S(v(t)) \qquad \mbox{for all $t$,}$$
and the sequential dynamics converges to a fixed point.
\end{proposition}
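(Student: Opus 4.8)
The plan is to prove the energy-monotonicity for a single coordinate update and then bootstrap it first to the whole sweep $S$ and finally to convergence. Since
$$S = \varphi_{(c,l)} \circ \varphi_{(c,l-1)} \circ \cdots \circ \varphi_{(1,1)}$$
is a composition of single-site updates, and a composition of maps each of which does not increase $H_S$ again does not increase $H_S$, it suffices to treat one update. So I would fix a site $(a,i)$, let $v'$ denote the state obtained from $v$ by replacing $v_{(a,i)}$ with $\varphi_{(a,i)}(v)$ and leaving every other coordinate unchanged, and compute $H_S(v')-H_S(v)$.

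The key algebraic step is to isolate the dependence of $H_S$ on the single coordinate $v_{(a,i)}$. Writing the local field $h_{(a,i)} = \sum_{(b,j)} W_{(a,i),(b,j)} v_{(b,j)}$, I would use that $W$ is symmetric and that $W_{(a,i),(a,j)}=0$ for all $j$; the latter guarantees in particular $W_{(a,i),(a,i)}=0$, so $h_{(a,i)}$ does not depend on $v_{(a,i)}$ itself. Collecting the quadratic and the linear contribution of $v_{(a,i)}$ then gives
$$H_S(v) = v_{(a,i)}\bigl(\kappa c - h_{(a,i)}\bigr) + \bigl(\text{terms independent of } v_{(a,i)}\bigr),$$
so that $H_S(v') - H_S(v) = (v'_{(a,i)} - v_{(a,i)})(\kappa c - h_{(a,i)})$. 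Because the update rule sets $v'_{(a,i)} = \mathds{1}_{\{h_{(a,i)} \ge \kappa c\}}$, the two factors always have opposite sign (or one of them vanishes): if $h_{(a,i)} \ge \kappa c$ then $v'_{(a,i)}=1 \ge v_{(a,i)}$ while $\kappa c - h_{(a,i)} \le 0$, and if $h_{(a,i)} < \kappa c$ then $v'_{(a,i)}=0 \le v_{(a,i)}$ while $\kappa c - h_{(a,i)} > 0$. In either case the product is $\le 0$, which proves $H_S(v') \le H_S(v)$ and hence $H_S(v(t+1)) \le H_S(v(t))$ for every $t$.

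For convergence I would argue as follows. Since $v(t) \in \{0,1\}^{cl}$, the function $H_S$ takes only finitely many values; being non-increasing along the trajectory, the sequence $(H_S(v(t)))_t$ is eventually constant, say equal to $H^*$ for all $t \ge T$. For such $t$ the total decrease over one sweep $v(t)\to v(t+1)$ is zero, and since every single-site update in the sweep contributes a non-positive amount, each of them must contribute exactly zero. Reading this back through the formula above, an energy-neutral update either leaves the coordinate unchanged or flips it from $0$ to $1$ (the latter possible only in the borderline case $h_{(a,i)} = \kappa c$); a flip from $1$ to $0$ would force the strict decrease $-(\kappa c - h_{(a,i)}) < 0$ and is therefore excluded.

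The step I expect to be the main obstacle is precisely this last point: energy monotonicity together with finiteness of the value set does \emph{not} by itself rule out the trajectory oscillating among states of equal energy, because of the energy-neutral $0 \to 1$ flips. The resolution I would use is a monotonicity observation in the constant-energy regime: for $t \ge T$ the state can only gain ones and never lose them, so the integer $\sum_{(a,i)} v_{(a,i)}(t)$ is non-decreasing; being bounded above by $cl$, it must stabilize after finitely many further steps. Once it is constant no flip can occur at all, so $S(v(t)) = v(t)$ and the sequential dynamics has reached a fixed point, which completes the proof.
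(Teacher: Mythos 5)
Your proof is correct and follows essentially the same route as the paper: a single-site energy computation (your factored form $(v'_K - v_K)(\kappa c - h_K)$ is just a compact rewriting of the paper's two-case calculation, both resting on the symmetry of $W$ and on $W_{K,K}=0$), followed by finiteness of the energy values and the observation that $1\to 0$ flips strictly decrease $H_S$. If anything, you are more careful than the paper on the last step: its one-line conclusion glosses over the possibility of energy-neutral $0\to 1$ flips in the borderline case $h_{(a,i)}=\kappa c$, which your monotone count of ones explicitly rules out as a source of non-convergence.
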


\begin{proof}%[Proof of Proposition \ref{lemma1}]
To simplify notation, we denote by capital letters the double indices, for example: $I= (a,i)$.
With this notation, $H_S$ can be written as
$$H_S(v) = -\frac 12 \sum_{I} \sum_{J}  W_{I, J}  v_{I} v_{J} +  \kappa c \sum_{I}  v_{I}.$$

Let $v\in (\{0,1\}^l)^c$ not be a fixed point of $S$. Then there exists $K$ such that
 $S_K(v):= (S(v))_K \ne v_K$. Then we can define $w\in (\{0,1\}^l)^c$ such that $w_K=S_K(v)$ and  $w_I= v_I$ for all $I\ne K$. Then we need to prove that
$$H_S(w ) \le H_S(v).$$

First suppose that $v_K=0$ and $w_K=S_K(v)=1$. Then
\begin{eqnarray*}
&& H_S(w) - H_S(v)\\&= &-\frac 12 \sum_{I(\ne K)}  W_{I, K}  v_{I} -\frac 12 \sum_{J(\ne K)}  W_{K, J}  v_{J}\\
&&\qquad + \kappa c \\
& =& -\sum_{I}  W_{I, K}  v_{I} +\kappa c\\
&\le& 0
\end{eqnarray*}

Here we first use the symmetry $W_{I,J}= W_{J,I}$,
and we exploit $W_{K,K} = 0$.
Then finally, the last inequality  is true since $S_K(v)=1$ if and only if $\sum_{I}  W_{I, K}  v_{I} -\kappa c \ge 0$.

\vspace{2mm}
The second case is very similar : if $v_k=1$ and $w_K=S_K(v)=0$, then
\begin{eqnarray*}
&&H_S(w) - H_S(v)\\&= &\frac 12 \sum_{I(\ne K)}  W_{I, K}  v_{I} \\
&&\quad +\frac 12 \sum_{J(\ne K)}  W_{K, J}  v_{J} -\kappa c \\
& = &\sum_{I}  W_{I, K}  v_{I} -\kappa c\\
&<& 0
\end{eqnarray*}

Here we use that $W_{K,K} = 0, v_K=1$ and the last inequality is true since $S_K(v)=0$ if and only if $\sum_{I}  W_{I, K}  v_{I} -\kappa c < 0$.

We deduce that $H_S(v(t+1)) \le H_S(v(t))$ for all $t$ and that for all initial state $v(0)$, the dynamics converges to configurations minimizing at least locally the Hamiltonian. Moreover, the dynamics converges finally to a  fixed point, that is $v$ such that $S(v)=v$, since flipping from 1 to 0 strictly decreases the Hamiltonian and there are only finitely many possible energies.
\end{proof}

We will now consider the parallel dynamics.
Let $T= (\varphi_{(1,1)},  \ldots, \varphi_{(l,c)}),$ and for  all $v(0)$, the parallel dynamics $v(t+1)=T(v(t))$, i.e.
for all $(a,i),  v_{(a,i)}(t+1)= \varphi_{(a,i)}(v(t))$.
We have the following result :

\begin{proposition}\label{lemma2}
Let $H=H_T$ be the following function:
\begin{eqnarray*}
H_T(v) &=& - \sum_{(a,i)} \sum_{(b,j)}  W_{(a,i), (b,j)}  v_{(a,i)}\  y_{(b,j)} \\&& \quad +  \kappa c \sum_{(a,i)}  (v_{(a,i)} + y_{(a,i)}),
\end{eqnarray*}
where $y=T(v)$. Again $H$ can be considered as the Hamiltonian that turns $T$ into a Hamiltonian dynamics.

Then for all $M,c,l$, $v(0)$,
$$
H_T(v(t+1)) \le H_T(v(t)) \qquad \mbox  {for all $t$},$$
and the parallel dynamics converges to a fixed point $v^*$, i.e. $T(v^*)=v^*$, or to a limit cycle $(\tilde{v}^1, \tilde{v}^2)$ of length 2, i.e.
$$
T(\tilde{v}^1)=\tilde{v}^2 \quad \mbox{and   } T(\tilde{v}^2)=\tilde{v}^1.
$$
\end{proposition}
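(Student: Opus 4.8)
The plan is to adapt the classical energy-function argument for parallel (synchronous) Hopfield dynamics, which is already known to converge to fixed points or period-2 cycles. The key is to show that the function $H_T$ is non-increasing along the trajectory of $T$, and then to exploit finiteness of the state space to conclude convergence to a cycle of length at most $2$.

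First I would write down the increment $H_T(v(t+1)) - H_T(v(t))$ explicitly. Setting $v = v(t)$, $y = T(v) = v(t+1)$, and $z = T(y) = v(t+2)$, one should observe that $H_T$ as defined is really a function of the \emph{pair} $(v,y)$ of consecutive states, symmetric in its two slots because $W$ is symmetric. The natural strategy is to compute
$$
H_T(v(t+1)) - H_T(v(t)) = -\sum_{(a,i)} \Big(\sum_{(b,j)} W_{(a,i),(b,j)} y_{(b,j)} - \kappa c\Big)\big(z_{(a,i)} - v_{(a,i)}\big),
$$
using the symmetry of $W$ to collapse the double sum and cancel the terms involving $y$ against itself. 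The sign of the parenthesized factor $\sum_{(b,j)} W_{(a,i),(b,j)} y_{(b,j)} - \kappa c$ is exactly what the dynamics reads off to determine $z_{(a,i)} = \varphi_{(a,i)}(y)$: it is $\ge 0$ precisely when $z_{(a,i)} = 1$. Hence each summand $\big(\text{factor}\big)\big(z_{(a,i)} - v_{(a,i)}\big)$ is non-negative, because whenever $z_{(a,i)} - v_{(a,i)} = 1$ the factor is $\ge 0$, and whenever $z_{(a,i)} - v_{(a,i)} = -1$ the factor is $< 0$. This forces $H_T(v(t+1)) - H_T(v(t)) \le 0$, which is the desired monotonicity.

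Once monotonicity is established, convergence follows from a standard compactness-and-discreteness argument: the state space $\{0,1\}^{cl}$ is finite, so $H_T$ takes only finitely many values; a non-increasing sequence in a finite set is eventually constant, so $H_T(v(t))$ stabilizes. When $H_T(v(t+1)) = H_T(v(t))$, the non-negativity of every summand above forces the factor to vanish on exactly those indices where $z_{(a,i)} \ne v_{(a,i)}$; chasing this equality back through the definition of $\varphi$ shows that applying $T$ twice returns the original state, i.e. $z = v(t+2) = v(t)$. Therefore the trajectory is eventually $2$-periodic, giving either a fixed point (if $v(t+1)=v(t)$) or a genuine length-$2$ cycle $(\tilde v^1, \tilde v^2)$.

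The main obstacle I anticipate is the bookkeeping in the key identity: unlike the sequential case, where one flips a single coordinate and the Hamiltonian is a clean quadratic form, here \emph{every} coordinate updates simultaneously and $H_T$ is bilinear in the \emph{two} consecutive states rather than quadratic in one. The delicate point is to verify that the cross-terms organize so that only the difference $z - v$ (not $y$) survives after using $W_{(a,i),(a,i)}=0$ and the symmetry $W_{(a,i),(b,j)} = W_{(b,j),(a,i)}$, and that the threshold $\kappa c$ lines up with the $+\kappa c(v + y)$ linear term so that the sign of the surviving factor is exactly the decision rule of $\varphi$. Getting this sign bookkeeping right, and in particular confirming that strict equality in the energy increment characterizes precisely the $2$-periodic (rather than fixed) points, is where the care is needed; the convergence conclusion itself is then routine.
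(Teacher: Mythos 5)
Your key identity and sign analysis for the monotonicity of $H_T$ are exactly the paper's argument: writing $v=v(t)$, $y=v(t+1)$, $z=v(t+2)$, the symmetry of $W$ gives
$$H_T(v(t+1))-H_T(v(t))=-\sum_{J}\Big(\sum_I W_{J,I}\,y_I-\kappa c\Big)\big(z_J-v_J\big),$$
and each summand is non-negative because the sign of the bracket is precisely the decision rule determining $z_J=\varphi_J(y)$. Up to this point your proposal is correct and identical in route to the paper's proof.

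The gap is in your final step. You claim that once $H_T(v(t+1))=H_T(v(t))$, ``chasing the equality back through the definition of $\varphi$'' yields $v(t+2)=v(t)$. That is false as a one-step implication: equality of the energies only forces, at every coordinate $J$ where $z_J\ne v_J$, that the local field equals exactly $\kappa c$; by the threshold rule (field $\ge\kappa c$ gives output $1$) this means $v_J(t)=0$ and $v_J(t+2)=1$. So the state can genuinely change while the energy stays constant, and nothing you have said rules this out. What closes the argument (and what the paper does) is the observation that on the energy plateau the two-step dynamics $T\circ T$ is coordinate-wise monotone non-decreasing: disagreements can only be flips $0\to 1$, never $1\to 0$. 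Since there are only $cl$ coordinates, each bounded by $1$, the subsequences $v(t_0), v(t_0+2), v(t_0+4),\ldots$ are eventually constant, and only then does one get $v(t+2)=v(t)$ for all large $t$, i.e.\ convergence to a fixed point or a $2$-cycle. Without this extra monotonicity-plus-finiteness step your conclusion does not follow from energy equality alone.
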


\begin{proof}%[Proof of Proposition \ref{lemma2}]
With the previous notations, $H_T$ can be written as
\begin{eqnarray*}
H_T(v(t)) &=& - \sum_{I} \sum_{J}  W_{I, J}  v_{I}(t)\ v_{J}(t+1) \\
&&\quad +  \kappa c \sum_{I}  (v_{I}(t) +v_{I}(t+1))
\end{eqnarray*}
and
\begin{eqnarray*}
H_T(v(t+1))- H_T(v(t)) &=& \\
&&\hskip-4.5cm - \sum_{J}  (v_{J}(t+2)-v_J(t))  (\sum_I W_{J, I}  v_{I}(t+1) -  \kappa c),
\end{eqnarray*}
since $W$ is symmetric. And thus, $$H_T(v(t+1))- H_T(v(t))\leq 0.$$
Indeed,
\begin{itemize}
\item if $\sum_I W_{J, I}  v_{I}(t+1) -  \kappa c\ge 0$, then $v_{J}(t+2)=1$, and $v_{J}(t+2)-v_J(t)\ge 0$,
\item if $\sum_I W_{J, I}  v_{I}(t+1) -  \kappa c< 0$, then $v_{J}(t+2)=0$, and $v_{J}(t+2)-v_J(t) \le 0$.
\end{itemize}

This proves that $H_T$ is decreasing along each trajectory of the parallel dynamics and this dynamics converges to a subset $V$ such that $H_T(T(v))- H_T(v)=0$, for all $v\in V$. Moreover, if we consider the 2-steps dynamics $v(t+2)= T\circ T(v(t))$, the previous calculations show that, there exists a $t_0$ such that for all $t\ge t_0$,  for all $J$,
\begin{itemize}
\item either $v_J(t+2)= v_J(t),$
\item or  $v_J(t+2)\ne v_J(t)$ and $$\sum_I W_{J, I}  v_{I}(t+1) -  \kappa c=0,$$
which implies $v_J(t)=0, v_J(t+2)=1$.
\end{itemize}
Thus, after a finite number of steps, we have $v_I(t+2)= v_I(t),$ for all $I$, i.e. the dynamics converges to a fixed point or to a 2-cycle.

\end{proof}

\section{Bounds on the storage capacity}

The purpose of the present section is to analyze whether one can find threshold $\kappa$ such that an amount of $M=\alpha l^2$ patterns can be stored in the above described variant of the GB model. As we will see that this is indeed the case: Also in this sense our model is considerably more effective than the Hopfield model. Recall that in the latter, only for $M \le N/(2 \log N)$ all patterns are fixed points of the retrieval dynamics (see \cite{MPRV}, \cite{Burshtein},\cite{Bov98},and \cite{L98}, \cite{LV05} for correlated information). In our variant of the GP model there are $N= l \log l$ neurons, while the number of messages to be stored is proportional to $l^2$.
More precisely, we will prove the following theorem.

\begin{theorem}\label{thm1}
In the above model with coding matrix $W$, let $c=\log l$ and $M=\alpha l^2$. For $\kappa \le 1-1/c$ we have
\begin{enumerate}
\item If $\alpha < \kappa$,  for every fixed $\mu$,  every fixed block $a$ and every fixed coordinate $k$, we have that
$$
{\mathbb{P}}\left(\varphi_{(a,k)}(m^\mu)\neq m^\mu_{(a,k)}\right) \to 0
$$
as $l$ and therefore $N$ tends to infinity.
\item % If $\alpha < u(\kappa) \ \kappa$, where $u(\kappa)$ is the unique solution in  $]0,1[$ of the equation $$u-1-\log(u)=\frac 3{\kappa},$$
If $\alpha <  \kappa \exp(-(3+\kappa)/\kappa)$,
we have  that
{{
\begin{eqnarray*}
 {\mathbb{P}}\left(\exists \mu\in\{1,\ldots, M\}: T(m^\mu)\neq m^\mu\right) =\\
{\mathbb{P}}\left(\exists a, \exists i,\exists \mu: \varphi_{(a,i)}(m^\mu)\neq m^\mu_{(a,i)}\right) \to 0
\end{eqnarray*}}}
as $l$ and therefore $N$ tends to infinity.
\end{enumerate}
\end{theorem}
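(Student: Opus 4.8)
The plan is to reduce both statements to a single tail estimate for a local field and then, for part 2, to apply a union bound. Feeding a stored pattern $m^\mu$ into the network means $v_{(b,j)}=\zeta_{(b,j)}^\mu$, so the local field at a site $(a,k)$ is
\[
h_{(a,k)}:=\sum_{b=1}^c\sum_{j=1}^l W_{(a,k),(b,j)}\,\zeta_{(b,j)}^\mu
=\sum_{b\neq a}\sum_{\nu=1}^M \zeta_{(a,k)}^\nu\,\mathds{1}_{\{m_b^\nu=m_b^\mu\}}.
\]
First I would isolate the \emph{active} site $k=m_a^\mu$: there the term $\nu=\mu$ alone contributes $\sum_{b\neq a}1=c-1$, and since $\kappa\le 1-1/c$ we have $\kappa c\le c-1\le h_{(a,m_a^\mu)}$, so $\varphi_{(a,m_a^\mu)}(m^\mu)=1$ \emph{deterministically}. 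This is exactly where the hypothesis on $\kappa$ enters, and it shows that no error can occur on an active neuron; all errors live on the $l-1$ inactive sites $k\neq m_a^\mu$.

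For an inactive site the correct output is $0$, so the error event is $\{h_{(a,k)}\ge\kappa c\}$. Conditioning on $m^\mu$ and writing $p=1/l$, the field becomes a sum $h_{(a,k)}=\sum_{\nu\neq\mu}X_\nu$ of i.i.d.\ variables $X_\nu=\zeta_{(a,k)}^\nu B_\nu$, where $\zeta_{(a,k)}^\nu\sim\mathrm{Ber}(p)$ and $B_\nu=\sum_{b\neq a}\mathds{1}_{\{m_b^\nu=m_b^\mu\}}\sim\mathrm{Bin}(c-1,p)$ are independent, and by uniformity of the patterns the law of $X_\nu$ does not depend on the value of $m^\mu$. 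A direct computation gives $\E[h_{(a,k)}]=(M-1)(c-1)p^2\simeq\alpha c$. I would then apply an exponential Markov (Chernoff) bound with the fixed tilt $t=\log(\kappa/\alpha)>0$,
\[
\mathbb{P}(h_{(a,k)}\ge\kappa c)\le e^{-t\kappa c}\bigl(\E e^{tX_\nu}\bigr)^{M-1},
\]
using $\E e^{tX_\nu}=1-p+p(1-p+pe^t)^{c-1}$, the inequality $\log(1+x)\le x$, and $(1+p(e^t-1))^{c-1}\le e^{(c-1)p(e^t-1)}$. The point is that $c/l=\log l/l\to0$ forces $(c-1)p(e^t-1)\to0$, so this factor linearizes and $(M-1)\log\E e^{tX_\nu}\le \alpha c(e^t-1)(1+o(1))$. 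Substituting $e^t=\kappa/\alpha$ yields
\[
\mathbb{P}(h_{(a,k)}\ge\kappa c)\le \exp\bigl(-c\,\Lambda^*(\kappa)(1+o(1))\bigr),\qquad \Lambda^*(\kappa)=\kappa\log(\kappa/\alpha)-\kappa+\alpha,
\]
which is precisely the Poisson$(\alpha c)$ upper-tail rate. Since $\Lambda^*(\kappa)>0$ whenever $\kappa>\alpha$, part~1 follows.

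For part~2 I would sum this single-site estimate over all error-prone triples; the two events in the statement coincide by definition of $T$. Only inactive sites contribute, so there are $M\,c\,(l-1)\simeq\alpha l^3\log l$ of them, each with the same error probability. With $c=\log l$ the per-site bound reads $l^{-\Lambda^*(\kappa)(1+o(1))}$, so the union bound is at most
\[
\alpha\,(\log l)\,l^{\,3-\Lambda^*(\kappa)(1+o(1))},
\]
which tends to $0$ as soon as $\Lambda^*(\kappa)>3$. Finally I would check that the hypothesis delivers this: $\alpha<\kappa\exp(-(3+\kappa)/\kappa)$ is equivalent to $\kappa\log(\kappa/\alpha)-\kappa>3$, hence $\Lambda^*(\kappa)=\kappa\log(\kappa/\alpha)-\kappa+\alpha>3+\alpha>3$, with strict inequality.

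The main obstacle is not the algebra but the uniform control of the $o(1)$ corrections in the exponent: one must verify that the linearization of $(1+p(e^t-1))^{c-1}$ and the matching with the Poisson rate hold at the \emph{fixed} tilt $t=\log(\kappa/\alpha)$, and that the resulting sub-exponential prefactors are genuinely negligible against the polynomial gain $l^{3-\Lambda^*}$ in the union bound. Here the strict inequality $\Lambda^*(\kappa)>3$ (and the extra $+\alpha$ of slack the hypothesis provides) is what guarantees that the factor $l^{3-\Lambda^*}$ beats both the $\log l$ from the number of sites and the $o(c)$ slack in the Chernoff exponent.
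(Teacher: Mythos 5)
Your proposal is correct and follows essentially the same route as the paper's proof: the deterministic stability of active neurons via $\kappa c \le c-1$, an exponential Chebyshev (Chernoff) bound at the same tilt $t=\log(\kappa/\alpha)$ with linearization of the moment generating function justified by $c/l\to 0$, and a union bound over the $\simeq \alpha l^3 \log l$ sites requiring the exponential rate to exceed $3$, which the hypothesis $\alpha<\kappa\exp(-(3+\kappa)/\kappa)$ delivers with slack $\alpha$. The only cosmetic difference is that you organize the computation around the Poisson rate function $\Lambda^*(\kappa)$ and the binomial variable $B_\nu\sim\mathrm{Bin}(c-1,1/l)$, whereas the paper expands the moment generating function directly; the resulting estimates are identical.
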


\begin{rem}\normalfont
Again, the comparison to the Hopfield model is interesting. As mentioned above the storage capacity with $N$ neurons there is $N/(2 \log N)$. However, note that the information in our case is extremely sparse. Indeed, only $c=\log l$ out of the $l \log l$ many entries of a message are non-zero. A similar situation for the Hopfield model was considered in the non-rigorous paper by Amari \cite{Amari89}. He asserts that for a Hopfield model with $N$ neurons, where each message has ${\mathcal{O}}(\log N)$ many 1's and the other neurons are 0, the storage capacity is of order $N^2/(\log N)^2$, which is only slightly worse than our result.  \end{rem}

Moreover, we will also be interested in the error correcting abilities of the model.
Such  errors in a message occur,
% if the input vector has a $0$ instead of a $1$ or vice versa.
 if some characters are false, or  erased. In fact, both types of error are  equivalent, if we replace each  missing character with a randomly chosen letter.
If we not only require the messages to be fixed points of the network dynamics, but also that the network is able to correct a certain percentage of errors (hence works as a truly associative memory), this may lower its capacity. However, the order of the capacity is maintained as can be read off from the following theorem. We  will concentrate on one step of the parallel dynamics.
To this end we define the discrete ball of radius $r$ centered in $m^\mu$ as
\begin{eqnarray*}
{\mathcal{B}}(m^\mu, r) &=& \{ m \in {\mathcal{A}}^c :  d_H(m^\mu, m)\\ &&= \mbox{card}\{ j : m^\mu_j\neq m_j\} \le r\}.
\end{eqnarray*}

With this notation we have the following result.

\begin{theorem}\label{thm2}
In the above model with coding matrix $W$, let $c=\log l$ and $M=\alpha l^2$. Moreover choose $\gamma\in ]0,1[$,
and set $ \kappa = \min\{ 1-\gamma, 1-1/c\}, $
and take $\alpha < \kappa \exp(- (1+\kappa)/\kappa)).$

Then for all $\mu=1,\ldots, M$ and $v$ randomly chosen in ${\mathcal{B}}(m^\mu, \gamma c - 1)$, we have

$$
{\mathbb{P}}\left( T(v)= m^\mu\right) \to 1
$$
%$g(\gamma) = -\gamma \log(\gamma)- (1-\gamma)\log(1-\gamma), $ if $\gamma<1/2$
%and $g(\gamma) = \log 2$, if $\gamma\ge1/2$. Then with $\kappa=1-\gamma$
%we have that for $\alpha<\kappa \exp({-\frac{3+\kappa+g(1-\kappa)}{\kappa}}),$ that
%$${\mathbb{P}}\left(\exists v=(e_{i_1},\ldots,e_{i_c}), \exists\mu\in\lbrace1,\ldots,M\rbrace,\vert\lbrace j: e_{i_j}\neq m_j^\mu\rbrace\vert\leq \gamma c:T(v)\neq m^\mu\right) \to 0$$
as $l$ and therefore $N$ tends to infinity.

In other words for $\alpha >0$ small enough we can correct any randomly chosen input with at most $\gamma c-1$ errors.
\end{theorem}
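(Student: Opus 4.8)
The plan is to reduce the event $\{T(v)=m^\mu\}$ to statements about the local fields at the individual neurons and then to separate the analysis into the \emph{correct} neurons, which must switch on, and the \emph{incorrect} neurons, which must stay off. Writing $h_{(a,k)}(v):=\sum_{b=1}^c\sum_{j=1}^l W_{(a,k),(b,j)}v_{(b,j)}=\sum_{b\neq a}W_{(a,k),(b,v_b)}$ for the field felt at neuron $(a,k)$ (where $v_b$ is the active letter of $v$ in block $b$, and the diagonal blocks drop out because $W_{(a,i),(a,j)}=0$), the identity $T(v)=m^\mu$ holds if and only if $h_{(a,m_a^\mu)}(v)\ge\kappa c$ for every block $a$ and $h_{(a,k')}(v)<\kappa c$ for every $a$ and every $k'\neq m_a^\mu$.

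For the correct neurons I expect no probability to be needed. Let $S=\{b:v_b=m_b^\mu\}$ be the set of blocks on which $v$ agrees with $m^\mu$; since $v\in\mathcal{B}(m^\mu,\gamma c-1)$ we have $|S|\ge(1-\gamma)c+1$. For $b\in S$ the message $\mu$ itself contributes to $W_{(a,m_a^\mu),(b,m_b^\mu)}$, so this entry is deterministically at least $1$. Summing over the agreeing blocks $b\neq a$ gives $h_{(a,m_a^\mu)}(v)\ge|S|-1\ge(1-\gamma)c\ge\kappa c$, the last step using $\kappa=\min\{1-\gamma,1-1/c\}\le 1-\gamma$. This is precisely where the ``$-1$'' in the radius $\gamma c-1$ is needed, and it shows that every correct neuron fires with probability one.

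The work is in controlling the incorrect neurons. Fix $a$ and $k'\neq m_a^\mu$. Since $\zeta^\mu_{(a,k')}=0$, message $\mu$ does not contribute to $h_{(a,k')}(v)$, so conditionally on $m^\mu$ (hence on $v$) we have $h_{(a,k')}(v)=\sum_{\nu\neq\mu}\zeta^\nu_{(a,k')}X_\nu$ with $X_\nu:=\sum_{b\neq a}\zeta^\nu_{(b,v_b)}\sim\mathrm{Bin}(c-1,1/l)$ independent of $\zeta^\nu_{(a,k')}\sim\mathrm{Bern}(1/l)$, and the summands i.i.d.\ over $\nu$. Crucially only messages $\nu\neq\mu$ enter, and these are independent of $m^\mu$, so the bound below is uniform over $v\in\mathcal{B}(m^\mu,\gamma c-1)$ and it suffices to treat a fixed $v$. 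The mean of this field is $\approx\alpha(c-1)$ and, since $M=\alpha l^2$ and $1/l\to0$, it is asymptotically $\mathrm{Poisson}(\alpha c)$. I would estimate its upper tail by an exponential Markov inequality: for $t>0$,
$$\mathbb{P}\big(h_{(a,k')}(v)\ge\kappa c\big)\le e^{-t\kappa c}\Big(1-\tfrac1l+\tfrac1l\big(1+\tfrac{e^t-1}{l}\big)^{c-1}\Big)^{M}.$$
Using $1+x\le e^x$, $M/l=\alpha l$ and $(1+(e^t-1)/l)^{c-1}=1+(c-1)(e^t-1)/l+O(c^2/l^2)$, the right-hand side is at most $\exp\!\big(-t\kappa c+\alpha(c-1)(e^t-1)+o(1)\big)$; optimizing at $e^t=\kappa/\alpha$ gives
$$\mathbb{P}\big(h_{(a,k')}(v)\ge\kappa c\big)\le\exp\big(-c\,r\,(1+o(1))\big),\qquad r=\kappa\log\tfrac{\kappa}{\alpha}-\kappa+\alpha.$$

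Finally I would close with a union bound over the at most $c(l-1)<cl=l\log l$ incorrect neurons. The hypothesis $\alpha<\kappa\exp(-(1+\kappa)/\kappa)$ is exactly the statement $\kappa\log(\kappa/\alpha)>1+\kappa$, i.e.\ $r>1+\alpha>1$; hence $cl\cdot e^{-rc}=c\,e^{-(r-1)c}\to0$, and since the margin $r-1>\alpha$ is bounded away from $0$ the $o(1)$ corrections are harmless. Thus with probability tending to one no incorrect neuron fires, and together with the deterministic firing of the correct neurons this yields $\mathbb{P}(T(v)=m^\mu)\to1$. The main obstacle is the tail estimate of the third step: one must correctly handle the dependence among the entries $W_{(a,k'),(b,v_b)}$ (they share the factors $\zeta^\nu_{(a,k')}$), keep the Poissonization corrections genuinely negligible relative to $c=\log l$, and verify that the threshold condition on $\alpha$ buys precisely the margin $r>1$ needed to defeat the $l\log l$ union bound (note that a bound uniform over all $\mu$ would instead require $r>2$, which is why the statement is phrased per fixed $\mu$).
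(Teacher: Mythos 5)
Your proof is correct and follows essentially the same route as the paper: correct neurons fire deterministically because the agreeing blocks alone contribute a field of at least $(1-\gamma)c \ge \kappa c$, while each incorrect neuron is controlled by the same exponential Chebyshev bound (a sum over $\nu\neq\mu$ of i.i.d.\ Bernoulli-times-Binomial terms), optimized at $e^t=\kappa/\alpha$ and combined with a union bound over the $cl$ neurons, yielding exactly the paper's condition $\kappa\log(\kappa/\alpha)-\kappa+\alpha>1$. The only slip is in your closing aside: a bound uniform over all $\mu$ would require $r>3$ rather than $r>2$ (the union is over $Mcl\approx\alpha c l^3$ events), which is consistent with the factor $\exp(-(3+\kappa)/\kappa)$ appearing in Theorem \ref{thm1}.
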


\begin{rem}
\normalfont
Observe that, other than in the Hopfield model, our model can even repair more than 50 percent mistakes. This is due to the fact  that in the Hopfield model the situation between inputs $+1$ and $-1$ is symmetric, while here a signal $0$ is something completely different than a signal $1$. Repairing wrong input messages is also a problem in the original GB model, since there the number of $1$'s is decreasing with the dynamics, i.e. a wrong 0 will always stay a wrong 0.

Let us remark that the  Theorem \ref{thm2} is true for random errors and can't be extended to all messages in ${\mathcal{B}}(m^\mu, \gamma c - 1)$, for $\gamma \in ]0,1[$. For example, if we consider the message $v$, which coincides with the message $m^1$ for the blocks $a=1,\ldots, c/2$ and  with $m^2$ for the block $c/2+1,\ldots, c$, we have necessarily $T(v)\ne m^1$ or $T(v)\ne m^2$. The correction of all possible corrupted messages with a given number of errors is a really different problem: as mentioned in \cite{KomlosPaturi1988} for the Hopfield model, random errors and worst case errors behave
in entirely different ways.
\end{rem}

For the Hopfield model, using negative association,  Bovier proved  in \cite{Bov98} that the capacity $M=N / (2 \log N)$ is in a certain sense optimal and cannot be surpassed. For our model,
 Theorem \ref{thm1} states that for $M= \alpha l^2$ , with $\alpha$ small enough, the memorized messages are fixed points of the dynamics: From the proof of this theorem in the following section, we see that if $\kappa\le 1-1/c$ and $\alpha \le  \kappa \exp(-(1+\kappa)/\kappa)$,  then for all $\mu=1,\ldots, M$,
$$\lim_{l\rightarrow\infty}{\mathbb{P}}(T(m^\mu)= m^\mu)=1.$$
In particular, if we choose $\kappa= 1-1/c$, we obtain the maximal value
$$\alpha^* =(1-1/c)\exp(-1-\frac{c}{c-1}) \simeq e^{-2}\simeq 0.135.$$
A natural question is whether the bound we get for $\alpha$ is optimal.
A first step in this direction is the following result where we obtain an upper bound for the capacity, in the sense of the stability of the memorized messages.

\begin{theorem}\label{thm3}
Let $\kappa\leq 1-1/c$. Then, for each $\alpha>-\log(1-e^{-1})\simeq 0.45$ and each $\mu\in\lbrace1,\ldots,M\rbrace$, the message $m^\mu$ is not stable with probability converging to 1 when $l$ tends to infinity, i.e.
$$\lim_{l\rightarrow\infty}{\mathbb{P}}(T(m^\mu)\neq m^\mu)=1.
$$
\end{theorem}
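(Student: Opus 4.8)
The plan is to show that, with probability tending to $1$, at least one \emph{inactive} neuron is switched on by one step of the dynamics $T$, which already forces $T(m^\mu)\neq m^\mu$. First I would record that the active neurons can never cause instability: for a neuron $(a,m_a^\mu)$ the stored message $\mu$ itself contributes $\zeta^\mu_{(a,m_a^\mu)}\zeta^\mu_{(b,m_b^\mu)}=1$ to every $W_{(a,m_a^\mu),(b,m_b^\mu)}$ with $b\neq a$, so $\sum_{b\neq a}W_{(a,m_a^\mu),(b,m_b^\mu)}\geq c-1\geq\kappa c$ and $\varphi_{(a,m_a^\mu)}(m^\mu)=1$. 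Hence $m^\mu$ fails to be a fixed point precisely when some inactive neuron $(a,i)$, $i\neq m_a^\mu$, satisfies $\sum_{b\neq a}W_{(a,i),(b,m_b^\mu)}\geq\kappa c$. For such a neuron define its degree $D_{(a,i)}=\#\{b\neq a:\ W_{(a,i),(b,m_b^\mu)}\geq1\}$ into the clique of $m^\mu$; since $\kappa\leq 1-1/c$ gives $\kappa c\leq c-1$, the event $D_{(a,i)}=c-1$ already implies instability. It therefore suffices to prove that, whp, some inactive neuron is joined to the active neuron of \emph{every} other block.

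Next I would fix one block, say $a=1$, and count $X=\sum_{i\neq m_1^\mu}\mathds 1_{\{D_{(1,i)}=c-1\}}$. The key structural observation is that $D_{(1,i)}=c-1$ depends only on the set $S_i=\{\nu\neq\mu:\ m_1^\nu=e_i\}$ of other messages whose first-block letter is $i$, together with the second-through-$c$-th letters of those messages: indeed $W_{(1,i),(b,m_b^\mu)}\geq1$ iff some $\nu\in S_i$ has $m_b^\nu=m_b^\mu$. Because each message has exactly one active letter per block, the sets $S_i$ are \emph{disjoint}; conditionally on the first-block assignment $\mathcal P=(m_1^\nu)_{\nu\neq\mu}$ the events $\{D_{(1,i)}=c-1\}$ are functions of disjoint families of messages and are hence \emph{independent}, with
$$\P\bigl(D_{(1,i)}=c-1\mid\mathcal P\bigr)=\bigl(1-(1-1/l)^{|S_i|}\bigr)^{c-1}.$$
Conditioning on $\mathcal P$ thus turns $X$ into a sum of independent indicators, so $\P(X=0\mid\mathcal P)=\prod_{i}\bigl(1-\P(D_{(1,i)}=c-1\mid\mathcal P)\bigr)\leq\exp\bigl(-\sum_i\P(D_{(1,i)}=c-1\mid\mathcal P)\bigr)$.

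It then remains a first-moment computation. Each $|S_i|$ is $\mathrm{Bin}(M-1,1/l)$ with mean $\approx\alpha l$, and a Chernoff bound together with a union bound over the $l$ values of $i$ shows that, on an event of probability $\to1$, $|S_i|=\alpha l\,(1+o(1))$ simultaneously for all $i$, whence $(1-1/l)^{|S_i|}=e^{-\alpha}(1+o(1))$ and $\P(D_{(1,i)}=c-1\mid\mathcal P)=p^{\,c-1}(1+o(1))$ with $p=1-e^{-\alpha}$. With $c=\log l$ this gives $\sum_i\P(D_{(1,i)}=c-1\mid\mathcal P)\approx l\,p^{\,c-1}=p^{-1}l^{\,1+\log p}$, which tends to $\infty$ exactly when $\log p>-1$, i.e. $p>e^{-1}$, i.e. $\alpha>-\log(1-e^{-1})$. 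On this event $\P(X=0\mid\mathcal P)\to0$, and combining with the negligible bad event gives $\P(X=0)\to0$ and therefore $\P(T(m^\mu)\neq m^\mu)\geq\P(X>0)\to1$.

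The main obstacle is the tightness required in the concentration of the $|S_i|$. Because the conditional success probability is raised to the power $c-1=\log l$, a relative fluctuation $\delta$ in $(1-1/l)^{|S_i|}$ is amplified into a factor $(1+\delta)^{\log l}$, so only $|S_i|=\alpha l(1+o(1))$ with $o(1)\ll 1/\log l$ preserves the base $p$ of the exponent $p^{\,c-1}$; keeping this error $o(1)$ uniformly over all $l$ neurons (via Chernoff deviations of order $\sqrt{l\log l}$, giving relative error $O(\sqrt{\log l/l})$) is exactly what pins the threshold at $p=e^{-1}$ rather than at some larger value. One also has to check that the rare partitions $\mathcal P$ on which some $|S_i|$ is atypical contribute negligibly, which follows since their total probability is $o(1)$.
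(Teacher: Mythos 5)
Your proof is correct, and it reaches the paper's threshold $\alpha>-\log(1-e^{-1})$ by a genuinely different route in the one step where the paper's own argument is not fully rigorous. Both proofs make the same initial reduction: since $\kappa c\le c-1$, it suffices to find one inactive neuron in a fixed block that is connected to the active neuron of every other block. The paper then handles the dependence among the $l-1$ inactive neurons of that block via a negative-association inequality, $\mathbb{P}(\text{all fail})\le\prod_i\mathbb{P}(\text{neuron } i \text{ fails})$, which it explicitly does \emph{not} prove (``Instead of an exact formal proof we give a more intuitive justification\dots This argument can in fact be turned into a rigorous proof''), and it computes the single-neuron probability exactly in Lemma 5.3 by conditioning on the binomial count $Z_{(1,2)}$ of messages through the neuron and performing an inclusion--exclusion computation, obtaining $\left(1-e^{-\alpha}\right)^{c-1}(1+o(1))$. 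You instead condition on the whole first-block assignment $\mathcal{P}=(m_1^\nu)_{\nu\ne\mu}$: because each message occupies exactly one letter per block, the sets $S_i$ are disjoint and the events $\{D_{(1,i)}=c-1\}$ become \emph{exactly} conditionally independent, which eliminates the need for any correlation inequality; the exact computation of Lemma 5.3 is then replaced by Chernoff concentration of $|S_i|$ around $\alpha l$ uniformly in $i$ (note that the paper's $Z_{(1,2)}$ is precisely your $|S_2|$, so the underlying binomial structure is identical). The trade-off: the paper's lemma yields the full limiting distribution of the degree $Y$, which is of independent interest, while your conditioning argument closes the paper's acknowledged gap and gives a self-contained rigorous proof; the price you pay is the concentration analysis, and you correctly identify the delicate point there --- the relative error in the base $1-(1-1/l)^{|S_i|}$ gets amplified by the exponent $c-1=\log l-1$, so one needs relative fluctuations $o(1/\log l)$, which the Chernoff deviations of order $\sqrt{\log l/l}$ comfortably provide since $(\log l)^{3/2}/\sqrt{l}\to 0$.
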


\begin{rem}
\normalfont
All the results concerning the stability of stored messages are the same if we consider the sequential dynamics $S$ instead of the parallel dynamics $T$.
\end{rem}

\section{Proof of Theorems \ref{thm1} and \ref{thm2}}
In this section Theorems \ref{thm1} and \ref{thm2} are proved. The proofs are rather similar and employ exponential inequalities. We will treat the proof of Theorem \ref{thm1} in greater details and will be slightly more sketchy about the proof of Theorem \ref{thm2}.

\begin{proof}[Proof of Theorem \ref{thm1}]
For a learned message $m^\mu \in {\mathcal{M}}$ we have to check, whether $m^\mu$ is a fixed point of the dynamics $\varphi$. Without loss of generality $\mu=1$ and the first pattern has a one entry at the first position of every block and $0$'s otherwise. That is to say $m^1=(e_1,\ldots,e_1)$.

Now there are two cases to consider, depending on whether we want to check that a $1$ remains a $1$ under the dynamics or a $0$ remains a $0$. Let us start with the first case.

For $i=1$, we have for $1\leq a\leq c$
$$\sum_{b=1}^c\sum_{j=1}^lW_{(a,1),(b,j)}m^1_{(b,j)}= \sum_{b=1}^c W_{(a,1),(b,1)} \ge c-1,
$$
so if we choose $\kappa \leq 1-1/c$, we immediately see that
$${\mathbb{P}}(\varphi_{(a,i)}(v)=1)=1.$$
So all entries with a $1$ are trivially fixed points of our dynamics.

For the case that the entry of $m^1$ is zero, we may without loss of generality consider
the case $a=1$ and $i=2$. Then
\begin{eqnarray}
&&{\mathbb{P}}\left(\varphi_{(1,2)}(m^1)\neq m^1_{(1,2)}\right)\nonumber\\
&=&{\mathbb{P}}\left(\sum_{b=2}^c\sum_{j=1}^lW_{(1,2),(b,j)}m^1_{(b,j)}\geq\kappa c\right)\nonumber\\
&=&{\mathbb{P}}\left(\sum_{b=2}^c\sum_{j=1}^l\sum_{\mu=2}^M\zeta_{(1,2)}^\mu\zeta_{(b,j)}^\mu m_{(b,j)}^1\geq \kappa c\right)\nonumber\\
&=&{\mathbb{P}}\left(\sum_{b=2}^c\sum_{\mu=2}^M\zeta_{(1,2)}^\mu\zeta_{(b,1)}^\mu \geq \kappa c\right)\nonumber\\
&\leq&e^{-t\kappa c}{\mathbb{E}}\left[e^{t\sum_{b=2}^c\sum_{\mu=2}^M\zeta_{(1,2)}^\mu\zeta_{(b,1)}^\mu }\right]\nonumber\\
&=&e^{-t\kappa c}{\mathbb{E}}\left[e^{t\sum_{b=2}^c\zeta_{(1,2)}^2\zeta_{(b,1)}^2 }\right]^{M-1}\label{first_estimate}
\end{eqnarray}
for some $t>0$.
Here we just apply the definition of the dynamics, the definition of $W$, the fact that we know, for which coordinates $m^1$ has a non-zero entry, and finally an exponential Chebyshev-inequality and the independence of the messages.

We compute ${\mathbb{E}}\left[e^{t\sum_{b=2}^c\zeta_{(1,2)}^2\zeta_{(b,1)}^2 }\right]$ and obtain the following bound:
\begin{eqnarray*}
&&{\mathbb{E}}\left[e^{t\sum_{b=2}^c\zeta_{(1,2)}^2\zeta_{(b,1)}^2 }\right]\\
&=&\left(1-\frac{1}{l}\right)\cdot1+\frac{1}{l}\left({\mathbb{E}}[e^{t\sum_{b=2}^c\zeta_{(b,1)}^2}]\right)\\
&\leq&\left(1-\frac{1}{l}\right)+\frac{1}{l}\left(\frac{e^t-1}{l}+1\right)^c\\
&\leq&\left(1-\frac{1}{l}\right)+\frac{1}{l}e^{c\cdot\frac{e^t-1}{l}},
\end{eqnarray*}
where we used the standard estimate $1+x \le e^x$ for all $x$.
Plugging this into \eqref{first_estimate}, we arrive at
\begin{eqnarray}
&&{\mathbb{P}}\left(\varphi_{(1,2)}(m^1)\neq m^1_{(1,2)}\right)\\
&\leq&e^{-t\kappa c}\left[\left(1-\frac{1}{l}\right)+\frac{1}{l}e^{c\cdot\frac{e^t-1}{l}}\right]^M\nonumber\\
&\leq&e^{-t\kappa c}e^{\frac{M}{l}\left(e^{c\cdot\frac{e^t-1}{l}}-1\right)}\nonumber.
\end{eqnarray}

Now, for fixed $t$ and $l\rightarrow\infty$, we have by expanding the first exponential in the exponent
\begin{eqnarray*}
&&{\mathbb{P}}\left(\varphi_{(1,2)}(m^1)\neq m^1_{(1,2)}\right)\\
&\leq&e^{-t\kappa c}\exp\left(\frac{M}{l}\left(e^{c\cdot\frac{e^t-1}{l}}-1\right)\right)\\
&=&e^{-t\kappa c}e^{\frac{M}{l}\left(c\frac{e^t-1}{l}+\frac 12\left(c\frac{e^t-1}{l}\right)^2+O\left(\left(c\frac{e^t-1}{l}\right)^3\right)\right)}\\
&=&e^{-t\kappa c}e^{\frac{Mc}{l^2}(e^t-1)+\frac{Mc^2}{2l^3}(e^t-1)^2+\frac{M}{l}O\left(\left(\frac{c}{l}(e^t-1)\right)^3\right)}\\
&\approx&\exp\left(c\left(-t\kappa+ \alpha(e^t-1)\right)\right),
\end{eqnarray*}
where for two sequences $(a_l)$ and $(b_l)$ we write $a_l \approx b_l$, if the fraction $a_l/b_l$ tends to 1 as $l$ tends to infinity.

The right hand side above takes its minimum at $t=\log(\kappa/\alpha)$. Plugging this value in  yields
$$
{\mathbb{P}}\left(\varphi_{(1,2)}(m^1)\neq m^1_{(1,2)}\right)\leq l^{\kappa-\alpha-\kappa\log(\kappa/ \alpha)}.
$$
If now $\alpha < \kappa$ the exponent is negative. This proofs the first assertion of the Theorem.

Moreover,
\begin{eqnarray*}
{\mathbb{P}}\left(\exists1\leq a\leq c,1\leq i\leq l,1\leq\mu\leq M:\varphi_{(a,i)}(m^\mu)\neq m^\mu_{(a,i)}\right)&&\\&&\hskip-8cm\leq Mlc\,e^{c\left(-t\kappa+ \alpha(e^t-1)\right)}%\stackrel{l\rightarrow\infty}{\rightarrow}0,
\end{eqnarray*}
Due to our choice of $M=\alpha l^2$ and $c=\log l$ this converges to 0, if
$$-t\kappa+ \alpha(e^t-1) < -3.$$
If we  replace $t$ by $\log(\kappa/\alpha)$ we obtain
$$-\kappa\log\left(\frac{\kappa}{\alpha}\right)+\kappa-\alpha<-3.
$$
We deduce that the previous inequality is true, if
$$-\kappa\log\left(\frac{\kappa}{\alpha}\right)<-3-\kappa,$$
that is $\displaystyle \alpha <  \kappa \exp(-(3+\kappa)/\kappa).$
%I think that this shows also that the probability of accepting wrong messages in Vincent's model goes to zero for this choice of parameters. In this model, we accept a message if each vertex gets at least one signal of each other cluster,
%\\

\end{proof}

We will now continue with proof of Theorem \ref{thm2}.

\begin{proof}[Proof of Theorem \ref{thm2}]
Choose $0< \gamma <1$  and as indicated in the theorem, we set $\kappa=1-\gamma$. Suppose we want to correct a message  with $r$ errors, where $r \le\gamma c-1$.
 Again without loss of generality we concentrate on the case $\mu=1$ and $m^1=(e_1,\ldots,e_1)$.
Let $v=(e_2,\ldots,e_2,e_1,\ldots,e_1)$ be the vector consisting of $e_2$ in the first $r$ places and $e_1$ in the last $c-r$ places, i.e. $v$ is $m^1$ with $r$ errors.

Now there are now four different cases to differentiate:
\begin{itemize}
\item The false $1$'s in the clusters with wrong entries have to be turned into a $0$,
\item The correct $0$'s in all clusters have to remain $0$s,
\item The wrong $0$'s have to be turned into $1$'s,
\item and the correct $1$'s have to remain $1$'s.
\end{itemize}

The last two cases indeed are similar and with our choice of $\kappa$, they have probability one. Indeed, for example take $a=1$, $b=1$. Using $v_{(b,j)}=1$ only if $b\le r, j=2$, or $b\ge r+1, j=1$, we obtain
\begin{eqnarray*}
&&{\mathbb{P}}\left(\sum_{b=1}^c\sum_{j=1}^lW_{(1,1),(b,j)}v_{(b,j)}<\kappa c\right)\\
&=&{\mathbb{P}}\left(\sum_{b=1}^{r}\sum_{j=1}^lW_{(1,1),(b,j)}v_{(b,j)}\right. \\
&&\hskip2cm \left. < \kappa c-\sum_{b=r+1}^{c}W_{(1,1),(b,1)}\right)\\
&\le&{\mathbb{P}}\left(\sum_{b=1}^{r}\sum_{j=1}^lW_{(1,1),(b,j)}v_{(b,j)}<\kappa c-c+r\right)\\&=&0,
\end{eqnarray*}
since $\displaystyle\sum_{b=r+1}^{c}W_{(1,1),(b,1)}\ge c-r$ and $\kappa c-c+r\le -\gamma c +r\le0$.\\

This proves that a wrong $0$ becomes a $1$ with probability $1$. Let us now consider the case of the correct $1$'s. For example, for $c=1$ and $b=1$, we obtain

\begin{eqnarray*}
&&{\mathbb{P}}\left(\sum_{b=1}^c\sum_{j=1}^lW_{(c,1),(b,j)}v_{(b,j)}<\kappa c\right)\\
&=&{\mathbb{P}}\left(\sum_{b=1}^{r}\sum_{j=1}^lW_{(c,1),(b,j)}v_{(b,j)}\right. \\
&& \hskip 2cm \left. <\kappa c-\sum_{b=r+1}^{c}W_{(c,1),(b,1)}\right)\\
&\le&{\mathbb{P}}\left(\sum_{b=1}^{r}\sum_{j=1}^lW_{(1,1),(b,j)}v_{(b,j)}\right.\\
&& \hskip 2cm \left. <\kappa c-c+r+1\right)\\&=&0,
\end{eqnarray*}
since $\displaystyle\sum_{b=r+1}^{c}W_{(c,1),(b,1)}\ge c-r-1$ and $\kappa c-c+r-1\le -\gamma c +r-1\le0$.\\

The first two cases are also identical and can be treated analogously to our last calculations: For the first case and $a=1$, using $\zeta_{(1,2)}^1=0$, we get exemplary
\begin{eqnarray*}
&&{\mathbb{P}}\left(\sum_{b=1}^c\sum_{j=1}^lW_{(1,2),(b,j)}v_{(b,j)}\geq\kappa c\right)\\
&=&{\mathbb{P}}\left(\sum_{b=2}^c\sum_{j=1}^l\sum_{\mu=2}^M\zeta_{(1,2)}^\mu\zeta_{(b,j)}^\mu v_{(b,j)}\geq\kappa c\right)\\
&=&{\mathbb{P}}\left(\sum_{b=2}^{r}\sum_{\mu=2}^M\zeta_{(1,2)}^\mu\zeta_{(b,2)}^\mu+\sum_{b=r+1}^{c}\sum_{\mu=2}^M\zeta_{(1,2)}^\mu\zeta_{(b,1)}^\mu \geq\kappa c\right)\\
&\leq&e^{-t\kappa c}{\mathbb{E}}\left[e^{t\sum_{b=2}^{c}\sum_{\mu=2}^M\zeta_{(1,2)}^\mu\zeta_{(b,1)}^\mu}\right]\\
&\leq&e^{c\left(-t\kappa+\alpha(e^t-1)\right)}
\end{eqnarray*}
This function is minimal in $t=\log\left({\kappa}/{\alpha}\right)$.\\

%The bound is also true for a message with $r$ errors, for all $r \le\gamma c$.
So, if we want to correct a randomly chosen message $v\in {\mathcal{B}}(m^\mu, \gamma c -1)$, we need to bound the following probability
\begin{align*}
{\mathbb{P}}\left(T(v)\neq m^\mu\right)
\leq& \ cl e^{c\left(-t \kappa+\alpha(e^t-1)\right)}\\
\leq& \ e^{c\left(-t \kappa+\alpha(e^t-1)+1+o(1)\right)},
\end{align*}

We want that the right hand side converges to 0, which is the case if
$$-t \kappa+\alpha(e^t-1)<-1.
$$
After filling in the minimizing $t$, i.e. $t=\log\left(\kappa/{\alpha}\right)$, this becomes
$$-\log\left(\frac{\kappa}{\alpha}\right)\kappa+\kappa-\alpha<-1,
$$
which is true if
$$\alpha<\kappa \exp({-\frac{1+\kappa}{\kappa}}),
$$
with $\kappa=1-\gamma$. This finishes the proof of Theorem \ref{thm2}.
\end{proof}

\section{Proof of Theorem \ref{thm3}}

In this section, we prove the lower bound on the storage capacity. i.e. Theorem \ref{thm3}.

\begin{proof}[Proof of Theorem \ref{thm3}]
Again, without loss of generality, we take $\mu=1$ and $m^\mu=(e_1,\ldots,e_1)$
Observe that the message $m^1$  cannot be a fixed point of the dynamics, if there is at least one node $v_{(a,i)}$, $i\neq 1$, with $\phi_{(a,i)}(m^1)=1$, which holds if
$$\sum_{b=1}^cW_{(a,i),(b,1)}v_{(b,1)}\geq c-1,
$$
and
\begin{eqnarray*}
{\mathbb{P}}(T(m^1)\neq m^1)&=&\\
&&\hskip-4cm{\mathbb{P}}(\exists a\in\lbrace1,\ldots,c\rbrace,\exists i\in\lbrace2,\ldots,l\rbrace:\phi_{(a,i)}(m^1)=1).
%\\
%=&1-{\mathbb{P}}(\forall a\in\lbrace1,\ldots,c\rbrace, i\in\lbrace2,\ldots,l\rbrace: \phi(m^1)_{(a,i)}=0).
\end{eqnarray*}

This is in particular the case if $v_{(a,i)}$ is connected to every neuron of $m^1$,
\begin{eqnarray*}
&&{\mathbb{P}}(T(m^1)\neq m^1)\\
&\geq&{\mathbb{P}}(\exists a\in\lbrace1,\ldots,c\rbrace,\exists i\in\lbrace2,\ldots,l\rbrace,\\
&&\quad  \forall b\in\lbrace1,\ldots,c\rbrace\setminus\lbrace a\rbrace: W_{(a,i),(b,1)}\geq1)
.
\end{eqnarray*}
This probability can be bounded by
\begin{eqnarray*}
&&{\mathbb{P}}(T(m^1)\neq m^1)\\
&\geq&{\mathbb{P}}(\exists a\in\lbrace1,\ldots,c\rbrace,\exists i\in\lbrace2,\ldots,l\rbrace, \\
&& \quad \forall b\in\lbrace1,\ldots,c\rbrace\setminus\lbrace a\rbrace: W_{(a,i),(b,1)}\geq1)\\
&=&1-{\mathbb{P}}(\forall  a\in\lbrace1,\ldots,c\rbrace,\forall i\in\lbrace2,\ldots,l\rbrace \\
&&\quad \exists b\in\lbrace1,\ldots,c\rbrace\setminus\lbrace a\rbrace: W_{(a,i),(b,1)}<1)\\
&\geq& 1-{\mathbb{P}}(\forall i\in\lbrace2,\ldots,l\rbrace \exists b\in\lbrace2,\ldots,c\rbrace:\\
&&\qquad  W_{(1,i),(b,1)}<1).
\end{eqnarray*}

The neurons $(1,i)$ and $(1,j)$, $i\neq j$, cannot be part of the same message. Hence, for $i\neq j$, we have
\begin{eqnarray*}
&&{\mathbb{P}}(\exists b_1, b_2\in\lbrace2,\ldots,c\rbrace:\\
&&\qquad W_{(1,i),(b_1,1)}<1,W_{(1,j),(b_2,1)}<1)\\
&\leq&{\mathbb{P}}(\exists b_1\in\lbrace2,\ldots,c\rbrace:W_{(1,i),(b_1,1)}<1)\times \\
&& \quad {\mathbb{P}}(\exists b_2\in\lbrace2,\ldots,c\rbrace: W_{(1,j),(b_2,1)}<1).
\end{eqnarray*}
Instead of an exact formal proof we give a more intuitive justification: If $(1,i)$ is not connected to every neuron $m^1_a$ of the other clusters a, $2,\leq a\leq c$, it will be more likely (but definitely not less likely, this is all we need) that $(1,j)$ is connected to every neuron $m_a^1$ of the other clusters, because $(1,i)\text{ and }(1,j)$ cannot be part of the same message. This means on the one hand that the fact that $(1,i)$ is not connected to every other cluster does not give any information about the neurons $m_a^1$ being activated in the same messages than $(1,j)$. On the other hand, $(1,i)$ and $(1,j)$ share the first cluster's $M-1$ places in the $M-1$ remaining messages, so they compete for the places, and the fact that $(1,i)$ is not connected to each of the other clusters indicates less messages containing $(1,i)$ and potentially more that can contain $(1,j)$. This argument can in fact be turned into a rigorous proof.\\
Altogether we have
\begin{eqnarray*}
&&\hskip -1.2cm {\mathbb{P}}(\forall i\in\lbrace2,\ldots,l\rbrace \exists b\in\lbrace2,\ldots,c\rbrace: W_{(1,i),(b,1)}<1)\\
&\leq&\prod_{i=2}^l{\mathbb{P}}(\exists b\in\lbrace2,\ldots,c\rbrace: W_{(1,i),(b,1)}<1)\\
&=&{\mathbb{P}}(\exists b\in\lbrace2,\ldots,c\rbrace: W_{(1,2),(b,1)}<1)^l.
\end{eqnarray*}
Thus, we have to estimate
$${\mathbb{P}}(\exists b\in\lbrace2,\ldots,c\rbrace: W_{(1,2),(b,1)}<1).
$$
\begin{lemma}\label{lemma3}
Let $m^1=(e_1,\ldots,e_1)$, \\$a\in\lbrace1,\ldots,c\rbrace$ and $i\in\lbrace2,\ldots,l\rbrace$.\\
We define $Y$ by the number of neurons belonging to the message $m^1$ which are connected to $(a,i)$.
Then for the distribution of $Y$, it holds that
$$
{\mathbb{P}}(Y=i)=\binom{c-1}{i}\left(1-e^{-\alpha}\right)^{i}e^{-\alpha(c-1-i)}(1+o(1)) 
$$
for $i\in\lbrace0,\ldots, c-1\rbrace$,
and 0, otherwise. 
In particular,
\begin{eqnarray*}
&&{\mathbb{P}}(\forall b\in\lbrace1,\ldots,c\rbrace\setminus\lbrace a\rbrace: W_{(a,i),(b,1)}=1)\\
&=&1-\left(1-e^{-\alpha}\right)^{c-1}(1+o(1))
\end{eqnarray*}
as $l\rightarrow\infty$.
\end{lemma}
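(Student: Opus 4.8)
The quantity $Y$ counts, among the $c-1$ blocks $b \neq a$, how many satisfy $W_{(a,i),(b,1)} \geq 1$, i.e. how many blocks contain at least one learned message that activates both the neuron $(a,i)$ and the neuron $(b,1)$. The natural strategy is to compute the distribution block by block and argue that the $c-1$ indicator events $\{W_{(a,i),(b,1)} \geq 1\}$ are asymptotically independent Bernoulli variables, so that $Y$ is asymptotically binomial. First I would fix a single block $b$ and compute the one-block success probability $\Pro(W_{(a,i),(b,1)} \geq 1)$. Recall $W_{(a,i),(b,1)} = \sum_{\mu=1}^M \zeta_{(a,i)}^\mu \zeta_{(b,1)}^\mu$, a sum of $M = \alpha l^2$ independent products, each product being a Bernoulli variable with parameter $1/l^2$ (since $\zeta_{(a,i)}^\mu$ and $\zeta_{(b,1)}^\mu$ are independent, each with parameter $1/l$, for $a \neq b$). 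Hence $W_{(a,i),(b,1)}$ is $\mathcal{B}in(M, 1/l^2)$ with mean $M/l^2 = \alpha$, and by the Poisson limit theorem it converges in distribution to $\mathcal{P}\mathrm{ois}(\alpha)$ as $l \to \infty$. Therefore
\[
\Pro(W_{(a,i),(b,1)} \geq 1) = 1 - \Pro(W_{(a,i),(b,1)} = 0) = 1 - \left(1 - \tfrac{1}{l^2}\right)^M \to 1 - e^{-\alpha},
\]
which gives the per-block success probability $1 - e^{-\alpha}$ and its complement $e^{-\alpha}$, matching the exponents in the claimed formula.

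The second step is to handle the joint distribution over the $c-1$ blocks. The indicators $\{W_{(a,i),(b,1)} \geq 1\}$ for distinct $b$ are \emph{not} exactly independent, because they all share the same neuron $(a,i)$: the messages $\mu$ in which $(a,i)$ is active are a common random resource competed for across the blocks. However, the only coupling comes through the random set $A = \{\mu : \zeta_{(a,i)}^\mu = 1\}$, and conditionally on $A$ the events across different blocks $b$ involve disjoint families of variables $\{\zeta_{(b,1)}^\mu\}$ and are genuinely independent. Since $|A|$ is $\mathcal{B}in(M, 1/l)$ with mean $M/l = \alpha l \to \infty$ and concentrates sharply around $\alpha l$, I would condition on $|A| = m$ for $m \approx \alpha l$ and observe that, given $m$ active messages for $(a,i)$, a single block fails to connect with probability $(1 - 1/l)^m \to e^{-\alpha}$, independently across blocks. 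This yields the product structure and the binomial form
\[
\Pro(Y = j) = \binom{c-1}{j}\left(1 - e^{-\alpha}\right)^j e^{-\alpha(c-1-j)}(1 + o(1)), \qquad j \in \{0, \ldots, c-1\},
\]
with the $1 + o(1)$ absorbing both the Poisson approximation error and the conditioning fluctuation. The final ``in particular'' statement is then immediate: the event $\{\forall b : W_{(a,i),(b,1)} \geq 1\}$ is exactly $\{Y = c-1\}$, whose complement has probability $1 - (1 - e^{-\alpha})^{c-1}(1 + o(1))$.

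The main obstacle is making the asymptotic independence rigorous while tracking the error uniformly. Two approximations are being stacked — the Poisson limit for each block and the factorization across blocks — and since $c = \log l \to \infty$, one must ensure the per-block $o(1)$ errors do not accumulate destructively when raised to the power $c-1$; writing $(1 - e^{-\alpha} + o(1))^{c-1} = (1-e^{-\alpha})^{c-1}(1+o(1))$ requires the per-block error to be $o(1/c) = o(1/\log l)$, which follows because the Binomial-to-Poisson error is $O(1/l^2)$ per block, comfortably smaller than $1/\log l$. I would carry out the conditioning on $|A|$ explicitly, bound the contribution of the rare event $\{|A| \text{ far from } \alpha l\}$ by a Chernoff estimate, and on the typical event use the clean conditional independence; the uniformity over $j$ then gives the stated distribution.
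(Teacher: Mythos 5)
Your proposal is correct, and its first half coincides with the paper's own key device: condition on the messages containing $(a,i)$ --- the paper conditions on their number $Z_{(1,2)}=n$, you on the set $A$ --- observe that given this information the connection events to the $c-1$ blocks are independent, each failing with probability $(1-1/l)^n$, and then average over $n$. Where you genuinely diverge is in evaluating that average. The paper computes it exactly: it expands $\bigl(1-(1-1/l)^n\bigr)^i$ by the binomial theorem, interchanges the sums, evaluates each inner sum in closed form as $\bigl(\tfrac1l(1-\tfrac1l)^{k+c-1-i}+1-\tfrac1l\bigr)^{M-1}$, and only then lets $l\to\infty$ term by term in the resulting \emph{alternating} sum over $k$. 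You instead keep the positive sum over $n$ and use concentration: $|A|\sim\mathcal{B}in(M-1,1/l)$ lies within $t=\sqrt{l}\,\log l$ of $\alpha l$ up to a Chernoff error $e^{-\Theta(\log^2 l)}$, and on that event $(1-1/l)^{|A|}=e^{-\alpha}\bigl(1+O(\log l/\sqrt{l})\bigr)$ uniformly, so the per-block error is $o(1/c)$ and the product over blocks acquires only a $(1+o(1))$ factor. Each route buys something. Yours is arguably the more robust: since all summands in the average over $n$ are nonnegative, uniform multiplicative control on the typical set transfers directly to the sum; the one point you must make explicit is that the target probability is itself polynomially small in $l$ (e.g.\ $(1-e^{-\alpha})^{c-1}=l^{-\Theta(1)}$ since $i$ may be as large as $c-1=\log l-1$), so the rare-event bound must be super-polynomially small --- which your choice of deviation delivers, but ``concentrates sharply'' alone does not. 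The paper's exact computation needs no concentration inequality at all, but it pays at the end: pushing term-wise $(1+o(1))$ factors through an alternating sum whose individual terms $\binom{i}{k}e^{-\alpha(k+c-1-i)}$ can exceed its value by factors polynomial in $l$ (when $i$ is of order $c$ and $\alpha$ is not large) requires extra justification that your positive-sum formulation simply sidesteps. Two small slips on your side, neither damaging: message $1$ cannot contain $(a,i)$ for $i\ge 2$, so the relevant laws are $\mathcal{B}in(M-1,1/l^2)$ and $|A|\sim\mathcal{B}in(M-1,1/l)$, not $\mathcal{B}in(M,\cdot)$; and the dominant per-block error in your scheme is not the Binomial-to-Poisson error $O(1/l^2)$ but the conditioning fluctuation $O(\log l/\sqrt{l})$ --- still $o(1/\log l)$, so your accumulation argument over the $c-1$ blocks stands as claimed.
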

With Lemma \ref{lemma3} in hands we can conclude the Proof of Theorem \ref{thm3} in the following way:
\begin{eqnarray*}
&&{\mathbb{P}}(\exists b\in\lbrace2,\ldots,c\rbrace: W_{(1,2),(b,1)}<1)\\&=&1-{\mathbb{P}}(\forall b\in\lbrace2,\ldots,c\rbrace: W_{(1,2),(b,1)}=1)\\
&=&1-\left(1-e^{-\alpha}\right)^{c-1}(1+o(1)).
\end{eqnarray*}
Finally,

$
{\mathbb{P}}(T(m^1)\neq m^1)
$

\vspace{-0.5cm}
$
\geq1-\left(1-\left(1-e^{-\alpha}\right)^{c-1}(1+o(1))\right)^l\stackrel{l\rightarrow\infty}{\rightarrow}1
$

for $\alpha>-\log(1-e^{-1})\approx0.45$.
\end{proof}

It remains to prove Lemma \ref{lemma3}.

\begin{proof}[Proof of Lemma \ref{lemma3}]
We choose without loss of generality $a=1$ and $i=2$. Let $Y$ be the number of neurons belonging to the message $m^1$ which are connected to $(1,2)$. To determine the distribution of $Y$, we split the event $\lbrace Y=i\rbrace$ into the disjoint events $$\lbrace Y=i, Z_{(1,2)}=n\rbrace,$$ where $Z_{(1,2)}$ is the number of messages containing $(1,2)$.

Then,
\begin{eqnarray*}
&&{\mathbb{P}}(Y=i\vert Z_{(1,2)}=n)\\&=&\left(1-\left(1-\frac{1}{l}\right)^n\right)^{i}\left(1-\frac{1}{l}\right)^{n(c-1-i)}.
\end{eqnarray*}
We arrive at

\begin{eqnarray*}
&&{\mathbb{P}}(Y=i)\\
&=&\binom{c-1}{i}\sum_{n=0}^{M-1}\binom{M-1}{n}\frac{1}{l^n}\left(1-\frac{1}{l}\right)^{M-1-n}\\
&&\quad \times \left(1-\left(1-\frac{1}{l}\right)^n\right)^{i}\left(1-\frac{1}{l}\right)^{n(c-1-i)}\\
&=&\binom{c-1}{i}\sum_{n=0}^{M-1}\binom{M-1}{n}\frac{1}{l^n}\\ 
&&\times\left(1-\frac{1}{l}\right)^{M-1-n+n(c-1-i)}\\
&&\times \sum_{k=0}^{i}\binom{i}{k}(-1)^k\left(1-\frac{1}{l}\right)^{nk}\\
&=&\binom{c-1}{i}\sum_{k=0}^{i}\binom{i}{k}(-1)^k\sum_{n=0}^{M-1}\\ 
&&\hskip-.3cm\times  \binom{M-1}{n}\frac{1}{l^n}\left(1-\frac{1}{l}\right)^{M-1-n+nk+n(c-1-i)}\\
&=&\binom{c-1}{i}\sum_{k=0}^{i}\binom{i}{k}(-1)^k\\ &&\times \left(\frac{1}{l}\left(1-\frac{1}{l}\right)^{k+c-1-i}+1-\frac{1}{l}\right)^{M-1}\\
&=&\binom{c-1}{i}\sum_{k=0}^{i}\binom{i}{k}(-1)^k\\
\end{eqnarray*}
\begin{eqnarray*}
&&\times \left(\frac{1}{l}-\frac{k+c-1-i}{l^2}+O(l^{-3})+1-\frac{1}{l}\right)^{M-1}\\
&=&\binom{c-1}{i}\sum_{k=0}^{i}\binom{i}{k}(-1)^k\\ && \times \left(1-\frac{k+c-1-i}{l^2}+O(l^{-3})\right)^{M-1}\\
&=&\binom{c-1}{i}\sum_{k=0}^{i}\binom{i}{k}(-1)^k\\ && \times e^{-\alpha(k+c-1-i)}(1+o(1))\\
&=&\binom{c-1}{i}\left(1-e^{-\alpha}\right)^{i}e^{-\alpha(c-1-i)}(1+o(1)).
\end{eqnarray*}

In particular
$$
{\mathbb{P}}(Y=c-1)=\left(1-e^{-\alpha}\right)^{c-1}(1+o(1)).
$$
\end{proof}

\bibliographystyle{abbrv}

\bibliography{LiteraturDatenbank}
\end{document}